\documentclass[12pt]{amsart}
\usepackage{a4wide}
\usepackage[T1]{fontenc}
\usepackage{amssymb,amsmath,amsthm,latexsym}
\usepackage{mathrsfs}
\usepackage[usenames,dvipsnames]{color}
\usepackage{euscript}
\usepackage{graphicx}
\usepackage{mdwlist}
\usepackage{mathtools,dsfont,wasysym}
\usepackage{stmaryrd}
\usepackage[dvipsnames]{xcolor}

\DeclareFontFamily{U}{mathx}{\hyphenchar\font45}
\DeclareFontShape{U}{mathx}{m}{n}{
      <5> <6> <7> <8> <9> <10>
      <10.95> <12> <14.4> <17.28> <20.74> <24.88>
      mathx10
      }{}

\usepackage{todonotes}

\newtheorem{theorem}{Theorem}[section]
\newtheorem*{theoremA}{Theorem A}
\newtheorem*{theoremB}{Theorem B}

\newtheorem{lemma}[theorem]{Lemma}
\newtheorem{corollary}[theorem]{Corollary}
\newtheorem{proposition}[theorem]{Proposition}
\newtheorem{fact}[theorem]{Fact}
\theoremstyle{remark}
\newtheorem{remark}[theorem]{Remark}
\newtheorem*{claim*}{Claim}
\theoremstyle{definition}
\newtheorem{definition}[theorem]{Definition}
\newtheorem{problem}[theorem]{Problem}

\numberwithin{equation}{section}
\makeatother

\newcommand{\nn}[1]{{\left\vert\kern-0.25ex\left\vert\kern-0.25ex\left\vert #1 
\right\vert\kern-0.25ex\right\vert\kern-0.25ex\right\vert}}
\newcommand{\embed}{\hookrightarrow}
\renewcommand{\leq}{\leqslant}
\renewcommand{\geq}{\geqslant}

\newcounter{smallromans}

\newenvironment{romanenumerate}
{\begin{list}{{\normalfont\textrm{(\roman{smallromans})}}}%
  {\usecounter{smallromans}\setlength{\itemindent}{0cm}%
   \setlength{\leftmargin}{5.5ex}\setlength{\labelwidth}{5.5ex}%
   \setlength{\topsep}{.5ex}\setlength{\partopsep}{.5ex}%
   \setlength{\itemsep}{0.1ex}}}%
{\end{list}}

\newcommand{\R}{\mathbb{R}}
\newcommand{\N}{\mathbb{N}}

\newcommand{\n}{\left\Vert\cdot\right\Vert}
\newcommand{\F}{\mathcal{F}}
\newcommand{\Fc}{\overline{\mathcal{F}}}

\newcounter{smallromansdash}

{\end{list}}

\newcounter{bigromans} 
{\end{list}}

%%%%%%%%%%%%%%%%%%%%%%%%%%%%%%%%%%%%%%%%%%%%%%%%%%%%%%%%%%%%%%%%%%%%%%%%%%%%%%%%%%%%%%%
%%%%%%%%%%%%%%%%%%%%%%%%%%%%%%%%%%%%%%%%%%%%%%%%%%%%%%%%%%%%%%%%%%%%%%%%%%%%%%%%%%%%%%%
\begin{document}
\title[An uncountable version of Pt\'ak's lemma]{An uncountable version of\\Pt\'ak's combinatorial lemma}

\author[P.~H\'ajek]{Petr H\'ajek}
\address[P.~H\'ajek]{Mathematical Institute\\Czech Academy of Science\\\v Zitn\'a 25 \\115 67 Praha 1\\
Czech Republic and Department of Mathematics\\Faculty of Electrical Engineering\\
Czech Technical University in Prague\\Technick\'a 2, 166 27 Praha 6\\ Czech Republic}
\email{hajek@math.cas.cz}

\author[T.~Russo]{Tommaso Russo}
\address[T.~Russo]{Department of Mathematics\\Faculty of Electrical Engineering\\
Czech Technical University in Prague\\Technick\'a 2, 166 27 Praha 6\\ Czech Republic and Dipartimento di matematica\\ Universit\`a degli Studi di Milano\\
via Saldini 50, 20133 Milano, Italy}
\email{tommaso.russo@unimi.it}

\thanks{Research of the first-named author was supported in part by OPVVV CAAS CZ.02.1.01/0.0/0.0/16$\_$019/0000778 and by GA\v CR 16-07378S, RVO: 67985840.
Research of the second-named author was supported by the project International Mobility of Researchers in CTU CZ.02.2.69/0.0/0.0/16$\_$027/0008465 and by Gruppo Nazionale per l'Analisi Matematica, la Probabilit\`a e le loro Applicazioni (GNAMPA) of Istituto Nazionale di Alta Matematica (INdAM), Italy.}

\keywords{Pt\'ak's combinatorial lemma, convex mean, adequate family, Corson compact, Erd\H{o}s space, pseudo-weight}
\subjclass[2010]{05A20, 03E35 (primary), and 46B26, 46E15 (secondary).}
\date{\today}

\begin{abstract} In this note we are concerned with the validity of an uncountable analogue of a combinatorial lemma due to Vlastimil Pt\'ak. We show that the validity of the result for $\omega_1$ can not be decided in ZFC alone. We also provide a sufficient condition, for a class of larger cardinals.
\end{abstract}
\maketitle

%%%%%%%%%%%%%%%%%%%%%%%%%%%%%%%%%%%%%%%%%%%%%%%%%%%%%%%%%%%%%%%%%%%%%%%%%%%%%%%%%%%%%%%
%%%%%%%%%%%%%%%%%%%%%%%%%%%%%%%%%%%%%%%%%%%%%%%%%%%%%%%%%%%%%%%%%%%%%%%%%%%%%%%%%%%%%%%
\section{Introduction}
In his 1959 paper \cite{Ptak lemma}, Vlastimil Pt\'ak distilled a combinatorial lemma aimed at the investigation of weak compactness in Banach spaces. An interesting application of the lemma, and partial motivation for the result itself, was an elementary proof of the fact that if a uniformly bounded sequence of continuous functions $(f_n)_{n=1}^\infty\subseteq C(K)$ converges pointwise to a continuous function $f$, then $f$ may be uniformly approximated by convex combinations of the $f_n$ (\cite[\S 2.1]{Ptak w-cpt}). It is actually a standard exercise in Functional Analysis to understand this assertion as a particular case of Mazur's theorem that a closed and convex subset of a Banach space is weakly closed. However, this approach requires the Riesz representation theorem for $C(K)^*$, Lebesgue's dominated convergence theorem and the Hahn--Banach separation theorem; therefore, it relies on much deeper principles than the assertion itself, which, in particular, involves no measure theory whatsoever.

In later papers, Pt\'ak also applied and extended the same combinatorial ideas to include a treatment of weak compactness in terms of inverting the order of two limit processes, \emph{cf.} \cite{Ptak extension1, Ptak extension2}. This approach, already present in \cite{Ptak w-cpt}, is also followed in the monograph \cite[\S 24.6]{Kothe}, for the proof of Krein's theorem.\smallskip

This interesting lemma attracted the attention of the mathematical community, as witnessed by several papers dedicated to its different proofs or extensions; let us mention, among them, \cite{Kindler, Simons lattice, Simons related, Simons ptak}. More recently, it was also used---and given a different, Banach space theoretic, proof---in the paper \cite{BHO newPtak}. This proof is also included in \cite[Exercise 14.28]{FHHMZ}, where Pt\'ak's elementary proof of Mazur theorem is also outlined (\emph{cf.} \cite[Exercise 14.29]{FHHMZ}). One further introduction to this result may be found in \cite[\S I.3]{Todorcevic ptak}, or in the systematic survey \cite{Ptak survey} by Pt\'ak himself. \smallskip

Let us now proceed to recall the statement of the result under investigation; we shall require a piece of terminology, and we follow Pt\'ak's notation from \cite{Ptak lemma}. Given a set $S$ and a function $\lambda\colon S\to\R$ by the \emph{support} of $\lambda$ we understand the set ${\rm supp}(\lambda):=\{s\in S\colon \lambda(s)\neq0\}$; in the case that ${\rm supp}(\lambda)$ is a finite set, we shall say that $\lambda$ is \emph{finitely supported}.

A \emph{convex mean} is a finitely supported function $\lambda\colon S\to[0,\infty)$ such that $$\sum_{s\in S}\lambda(s)=1.$$
Plainly, a convex mean can also be naturally interpreted as a finitely supported probability measure on $\left(S,2^S\right)$ via the definition $\lambda(A):=\sum_{s\in A}\lambda(s)$, for $A\subseteq S$. In what follows, we shall profit from this notation, whenever convenient.

Given a set $S$, we shall also denote by $[S]^{<\omega}$ the collection comprising all finite subsets of $S$. All the necessary notation being set forth, we are now in position to recall the original statement of Pt\'ak's lemma.
\begin{lemma}[Pt\'ak's combinatorial lemma, \cite{Ptak lemma}]\label{Ptak I} Let $S$ be an infinite set and $\F\subseteq[S]^{<\omega}$ be a collection of finite subsets of $S$. Then the following conditions are equivalent:
\begin{romanenumerate}
\item there exist an infinite subset $H$ of $S$ and $\delta>0$ such that for every convex mean $\lambda$ with ${\rm supp}(\lambda)\subseteq H$ one has $$\sup_{F\in\F}\lambda(F)\geq\delta;$$
\item there exist a strictly increasing sequence of finite sets $(B_n)_{n=1}^\infty\subseteq[S]^{<\omega}$ and a sequence $(F_n)_{n=1}^\infty\subseteq\F$ such that $B_n\subseteq F_n$, for every $n\in\N$.
\end{romanenumerate}
\end{lemma}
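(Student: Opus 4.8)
The plan is to treat the two implications quite differently: (ii)$\Rightarrow$(i) is immediate, whereas (i)$\Rightarrow$(ii) carries all the combinatorial weight, and I would approach it by contraposition. For (ii)$\Rightarrow$(i), set $H:=\bigcup_{n}B_n$. Since $(B_n)$ is strictly increasing, $H$ is an infinite subset of $S$. If $\lambda$ is any convex mean with ${\rm supp}(\lambda)\subseteq H$, then ${\rm supp}(\lambda)$ is finite and the $B_n$ are nested, so ${\rm supp}(\lambda)\subseteq B_n$ for some $n$; as $B_n\subseteq F_n\in\F$, we get $\lambda(F_n)\geq\lambda(B_n)=1$. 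Hence (i) holds, in fact with $\delta=1$.

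For the converse I would first make two reductions. Since $\lambda(F)=\lambda(F\cap H)$ whenever ${\rm supp}(\lambda)\subseteq H$, and since every convex mean supported in a subset of $H$ is also a convex mean supported in $H$, condition (i) (with the same $\delta$) is inherited by every infinite subset of $H$; thus I may assume $H$ is countably infinite. Next I would reformulate (ii). Writing $\Fc$ for the hereditary closure $\{B\in[S]^{<\omega}\colon B\subseteq F\text{ for some }F\in\F\}$, a short argument using that $\Fc$ is closed under subsets shows that (ii) is equivalent to the existence of an infinite set $M\subseteq H$ \emph{all of whose finite subsets belong to} $\Fc$: enumerating such an $M$ produces the nested sets $B_n$ (each $B_n\in\Fc$ gives a witness $F_n$), while conversely $\bigcup_n B_n$ is such an $M$. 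So the goal becomes: (i) produces an infinite $M\subseteq H$ with $[M]^{<\omega}\subseteq\Fc$.

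I would prove this by contraposition. Assume that no infinite $M\subseteq H$ satisfies $[M]^{<\omega}\subseteq\Fc$; the aim is to contradict (i) by exhibiting, for every $\e>0$, a convex mean $\lambda$ on $H$ with $\sup_{F\in\F}\lambda(F)<\e$. The hypothesis says exactly that the tree $T$ consisting of the finite subsets of $H$ that lie in $\Fc$, ordered by end-extension along a fixed enumeration of $H$, is well founded: an infinite branch would, by heredity of $\Fc$, produce an infinite $M$ with $[M]^{<\omega}\subseteq\Fc$. (In the language of the keywords, $\Fc$ then generates an adequate family on $H$ consisting only of finite sets.) The avoiding means should be built by recursion on the rank of $T$: the mass is distributed among the---possibly infinitely many---immediate successors of each node so that it decays as one climbs the tree, forcing every $F\in\F$, which sits at a fixed finite height, to absorb only a vanishing portion of the total. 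The guiding instance is the Schreier family, for which the harmonic weights $\lambda(\{k\})\sim(k\ln N)^{-1}$ on $\{1,\dots,N\}$ already give $\sup_{F}\lambda(F)=O(1/\ln N)$; the recursion is a structural elaboration of this estimate.

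The step I expect to be the main obstacle is precisely this distribution of mass over a well founded but \emph{infinitely} branching tree: K\"onig's lemma is unavailable, so one cannot pass to a single branch, and the weights must be chosen simultaneously along all branches while keeping the total equal to $1$ and each finite member's mass below $\e$. It is exactly the breakdown of this balancing act in the uncountable setting---where well-foundedness no longer tames the family---that the paper isolates, which is why the countable case hinges on the recursion above.
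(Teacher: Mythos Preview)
Your treatment of (ii)$\Rightarrow$(i) coincides with the paper's (take $H=\bigcup_n B_n$; indeed $\delta=1$ works). For (i)$\Rightarrow$(ii), however, you take a genuinely different route. The paper does not argue by contraposition or by rank on a well-founded tree; instead, after passing to the hereditary family on $H$, it invokes Proposition~\ref{Prop: Ptak gives l1} to embed $\ell_1$ into $C(\overline{\F})$, where $\overline{\F}$ is the \emph{topological} closure in $\{0,1\}^H$. Then $C(\overline{\F})$ is not Asplund, so $\overline{\F}$ is uncountable, and since $[H]^{<\omega}$ is countable (for countable $H$) some $M\in\overline{\F}$ must be infinite; Fact~\ref{Fact: from Fclosure to Ptak} finishes. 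Your plan---show that the absence of such an $M$ makes the tree of admissible finite sets well founded, then build by rank recursion convex means with arbitrarily small $\sup_F\lambda(F)$---is the classical combinatorial approach, closer to Pt\'ak's original argument; it is sound, though you leave the mass-distribution step as a sketch. The trade-off is clear: your argument is more elementary and self-contained, while the paper's Banach-space detour, though it imports the Asplund machinery, is precisely what scales---replacing ``Asplund'' by ``WLD'' (under MA$_{\omega_1}$) or invoking Haydon's theorem is how the paper reaches $\omega_1$ and the cardinals of Theorem~B. One notational caution: your $\Fc$ denotes the hereditary closure inside $[S]^{<\omega}$, whereas in the paper $\Fc$ is the closure in $\{0,1\}^S$; the infinite $M$ you seek lives in the latter, not the former.
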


Let us observe that the proof of the implication (ii)$\implies$(i) is immediate, as witnessed by the choice of the set $H:=\cup_{n=1}^\infty B_n$. Therefore, the actual content of the lemma lies in the validity of the implication (i)$\implies$(ii) and it is precisely this implication to appear in the result as devised in \cite{BHO newPtak}.

In order to state this second formulation, we need one more definition. A family $\F\subseteq2^S$ is said to be \emph{hereditary} if whenever $F\in\F$ and $G\subseteq F$, then $G\in\F$ too.
\begin{lemma}[Pt\'ak's lemma, second formulation, \cite{BHO newPtak}]\label{Ptak II} Let $S$ be an infinite set and let $\F\subseteq[S]^{<\omega}$ be an hereditary family. Assume that there exists $\delta>0$ such that for every convex mean $\lambda$ on $S$ one has
$$\sup_{F\in\F}\lambda(F)\geq\delta.$$
Then there exists an infinite subset $M$ of $S$ such that every finite subset of $M$ is in $\F$.
\end{lemma}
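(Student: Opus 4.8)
The plan is to deduce the statement directly from Pt\'ak's combinatorial lemma (Lemma \ref{Ptak I}), using the heredity of $\F$ only at the very end. First I would observe that the hypothesis is precisely condition (i) of Lemma \ref{Ptak I} in the particular case $H:=S$. Indeed, with $\delta>0$ as given and $H:=S$ (which is infinite), a convex mean $\lambda$ with ${\rm supp}(\lambda)\subseteq H$ is exactly a convex mean on $S$, so the assumption yields $\sup_{F\in\F}\lambda(F)\geq\delta$. Hence Lemma \ref{Ptak I} applies and provides the objects of its condition (ii): a strictly increasing sequence $(B_n)_{n=1}^\infty\subseteq[S]^{<\omega}$ and a sequence $(F_n)_{n=1}^\infty\subseteq\F$ with $B_n\subseteq F_n$ for every $n\in\N$.

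The next step is to transfer membership in $\F$ from the $F_n$'s to the $B_n$'s. Since $B_n\subseteq F_n\in\F$ and $\F$ is hereditary, we get $B_n\in\F$ for every $n$. I would then define $M:=\bigcup_{n=1}^\infty B_n$; as the sequence $(B_n)$ is strictly increasing, the set $M$ is infinite. To see that every finite subset of $M$ lies in $\F$, take a finite $G\subseteq M$. Each element of $G$ belongs to some $B_n$, and because $(B_n)$ is totally ordered by inclusion, the finitely many indices involved have a maximum $N$, whence $G\subseteq B_N$. Since $B_N\in\F$ and $\F$ is hereditary, it follows that $G\in\F$, which is the desired conclusion.

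I do not anticipate any genuine obstacle in this derivation, since all the combinatorial difficulty has already been absorbed into the implication (i)$\implies$(ii) of Lemma \ref{Ptak I}, which I am free to assume. The only point deserving care is the interplay between heredity and the increasing union: heredity is what allows one to replace the family members $F_n$---over which one has little direct control---by the nested, explicitly given sets $B_n$, while the monotonicity of $(B_n)$ is exactly what guarantees that an arbitrary \emph{finite} subset of the union $M$ is trapped inside a single $B_N$, rather than merely being covered by the whole chain.
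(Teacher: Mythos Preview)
Your derivation is correct; it is exactly the ``formal equivalence'' argument the paper sketches in the paragraph immediately following the statement of Lemma~\ref{Ptak II}, where it is observed that the hypothesis of Lemma~\ref{Ptak II} is condition~(i) of Lemma~\ref{Ptak I} with $H=S$, and that heredity turns condition~(ii) into the desired conclusion. However, the paper's \emph{formal} proof of Lemma~\ref{Ptak II}, given at the end of Section~\ref{Sec2 General}, takes a genuinely different route: it invokes Proposition~\ref{Prop: Ptak gives l1} to obtain $\ell_1\hookrightarrow C(\Fc)$, deduces that $C(\Fc)$ is not Asplund, whence $\Fc$ is uncountable and therefore not contained in the countable set $[S]^{<\omega}$; any infinite $M\in\Fc$ then works by Fact~\ref{Fact: from Fclosure to Ptak}. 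Your approach is more elementary and self-contained, relying only on the combinatorial content of Lemma~\ref{Ptak I}. The paper's approach, while heavier in machinery, is the one that sets up the template used later: the proofs of Theorem~A(i) and Theorem~B follow the same pattern, replacing ``Asplund'' by ``WLD'' or by \v{S}apirovski\v{\i}--Haydon type arguments, so the functional-analytic proof of the countable case serves as a warm-up for the uncountable analogues that are the paper's actual goal.
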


Observe that, for an hereditary family $\F$, condition (ii) of Lemma \ref{Ptak I} is equivalent to the conclusion of Lemma \ref{Ptak II}, as a simple verification shows. More precisely, the condition $\F$ being hereditary is only used in the verification that (ii) implies the conclusion of Lemma \ref{Ptak II}. Moreover, the assumption in Lemma \ref{Ptak II} immediately implies (i) with $H=S$ and, conversely, under the validity of (i), the assumption of Lemma \ref{Ptak II} is satisfied for the infinite set $H$ and the hereditary family $\{F\cap H\colon F\in\F\}=\{F\in\F\colon F\subseteq H\}$.

Consequently, the two statements are formally equivalent. The advantage of the second formulation, from our perspective, is that it immediately suggests its possible generalisations to larger cardinalities, that we shall consider in our note.\smallskip

In order to have a more succinct formulation of our results, the following definition seems appropriate.
\begin{definition} Let $\kappa$ be an infinite cardinal number. We say that \emph{Pt\'ak's lemma holds true for} $\kappa$ if for every set $S$ with $|S|\geq\kappa$ and every hereditary family $\F\subseteq[S]^{<\omega}$ such that
$$(\dagger)\qquad\delta:=\inf\left\{ \sup_{F\in\F}\lambda(F)\colon\lambda\,\textrm{ is a convex mean on}\,S\right\}>0,$$
there is a subset $M$ of $S$, with $|M|=\kappa$, such that every finite subset of $M$ belongs to $\F$.
\end{definition}

Observe that in the above definition it would be equivalent to require the condition $|S|=\kappa$, on the set $S$; this is proved by a very simple argument, \emph{cf.} the first part of the proof of Lemma \ref{Ptak II}. Let us now proceed to state our main results.
\begin{theoremA} The validity of Pt\'ak's lemma for $\omega_1$ is independent of ZFC. More precisely:
\begin{romanenumerate}
\item {\rm (MA$_{\omega_1}$)} Pt\'ak's lemma holds true for $\omega_1$;
\item {\rm (CH)} Pt\'ak's lemma fails to hold for $\omega_1$.
\end{romanenumerate}
\end{theoremA}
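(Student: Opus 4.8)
The plan is to prove the two parts by completely different means, linked by a common measure-theoretic reformulation of $(\dagger)$.

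\emph{Reformulation (shared step).} First I would rephrase $(\dagger)$ as the existence of a genuine measure. Set $K:=\overline{\{\chi_F\colon F\in\F\}}\subseteq\{0,1\}^S$, a compact space, and for $s\in S$ let $e_s\in C(K)$ be the coordinate $x\mapsto x(s)$. For a convex mean $\lambda=\sum_i\lambda_i\delta_{s_i}$ one has $\max_{x\in K}\sum_i\lambda_i e_{s_i}(x)=\sup_{F\in\F}\lambda(F)$, so $(\dagger)$ becomes $\inf_\lambda\max_{x\in K}\sum_i\lambda_i e_{s_i}(x)=\delta$. The functions $\sum_i\lambda_ie_{s_i}$ form the convex set $\mathrm{conv}\{e_s\colon s\in S\}\subseteq C(K)$, and $P(K)$, the Radon probability measures on $K$, is convex and weak$^*$-compact; Sion's minimax theorem then lets me interchange the infimum with the supremum over $P(K)$ and produce $\mu\in P(K)$ with $\mu(\tilde U_s)\geq\delta$ for every $s\in S$, where $\tilde U_s:=\{x\in K\colon x(s)=1\}$ is clopen. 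Conversely, any such $\mu$ forces $(\dagger)$. I expect this duality to be the main obstacle, as it converts the combinatorics of $\F$ into a single countably additive measure charging every coordinate by at least $\delta$.

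\emph{Part (i).} The measure algebra of $(K,\mu)$ is ccc, hence under MA$_{\omega_1}$ it has precaliber $\omega_1$. Applying this to the $\omega_1$ elements $[\tilde U_s]$ (each of measure $\geq\delta$) yields an uncountable $I\subseteq S$ that is centered, i.e.\ $\mu\bigl(\bigcap_{s\in J}\tilde U_s\bigr)>0$ for every finite $J\subseteq I$. Positivity gives some $x\in K$ with $x(s)=1$ for all $s\in J$; since $x$ lies in the closure of $\{\chi_F\}$ and $\{x\colon x(s)=1,\ s\in J\}$ is clopen, some $F\in\F$ contains $J$, whence $J\in\F$ by heredity. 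Thus $M:=I$ is uncountable with $[M]^{<\omega}\subseteq\F$, which is exactly the conclusion of Pt\'ak's lemma for $\omega_1$.

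\emph{Part (ii).} Here I would build the counterexample by hand, exploiting $\mathfrak c=\omega_1$. Enumerate $2^\omega=\{t_\alpha\colon\alpha<\omega_1\}$, let $\mu$ be the product measure, and for each $\xi<\omega_1$ use inner regularity to choose a closed set $B_\xi\subseteq 2^\omega\setminus\{t_\beta\colon\beta<\xi\}$ with $\mu(B_\xi)\geq 1/2$ (possible, as the complement is co-countable, hence of full measure). Put $S=\omega_1$ and $\F:=\{J\in[\omega_1]^{<\omega}\colon\bigcap_{\xi\in J}B_\xi\neq\emptyset\}$. This family is hereditary, and integrating $\sum_i\lambda_i\mathds{1}_{B_{\xi_i}}$ against $\mu$ produces, for each mean $\lambda$, a point lying in the $B_\xi$ over a finite $J$ with $\lambda(J)\geq1/2$, so $(\dagger)$ holds with $\delta=1/2$. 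The crucial feature is that $t_\alpha\in B_\xi$ forces $\xi\leq\alpha$, so the family $\{B_\xi\}$ is point-countable. Consequently, were $M$ uncountable with $[M]^{<\omega}\subseteq\F$, the closed sets $\{B_\xi\colon\xi\in M\}$ would have the finite intersection property and compactness of $2^\omega$ would furnish a point lying in uncountably many of them, contradicting point-countability. Hence no such $M$ exists and Pt\'ak's lemma fails for $\omega_1$.

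Finally, since MA$_{\omega_1}$ implies $\neg$CH the two parts are mutually consistent, and the construction in (ii) manifestly requires $\mathfrak c=\omega_1$, which is precisely what prevents it under MA$_{\omega_1}$; together they give the asserted independence.
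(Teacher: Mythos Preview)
Your argument is correct in both parts, and your routes differ in interesting ways from the paper's.

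For part~(i), the paper uses Banach space machinery: it shows (Proposition~\ref{Prop: Ptak gives l1}) that $(\dagger)$ forces $\ell_1(\omega_1)\hookrightarrow C(\Fc)$, so $C(\Fc)$ is not WLD; under MA$_{\omega_1}$ this means $\Fc$ is not Corson, hence contains an uncountable element. Your route is more elementary and measure-theoretic: the minimax step produces a single probability $\mu$ on $K=\Fc$ charging every $\tilde U_s$ by at least $\delta$, and then MA$_{\omega_1}$ is invoked only through the precaliber-$\omega_1$ property of the (ccc) measure algebra, which immediately yields an uncountable centred subfamily. Your argument is shorter and avoids the WLD/Corson detour, while the paper's approach has the advantage of slotting into the same $\ell_1(\kappa)\hookrightarrow C(\Fc)$ framework that drives Theorem~B via Haydon's and \v{S}apirovski\v{\i}'s theorems.

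For part~(ii), the underlying construction is essentially the same Erd\H{o}s-type family as in the paper: a CH enumeration gives a point-countable collection of closed sets of large measure whose finite-intersection pattern defines $\F$. The paper, however, verifies $(\dagger)$ indirectly: it first argues that the adequate compact $\mathcal{A}$ fails property~(M), then uses a non-separably supported measure on $\mathcal{A}$ to extract an uncountable $S$ and a $\delta$. Your verification is more direct---integrating $\sum_i\lambda_i\mathds{1}_{B_{\xi_i}}$ against the ambient product measure immediately gives $(\dagger)$ with $\delta=1/2$ on all of $\omega_1$---and thereby bypasses property~(M) entirely. This is a genuine simplification, at the cost of not exhibiting the Corson-without-(M) phenomenon that the paper highlights.
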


\begin{theoremB} Let $\kappa$ be a regular cardinal number such that $\lambda^\omega<\kappa$ whenever $\lambda<\kappa$. Then Pt\'ak's lemma is true for $\kappa$.
\end{theoremB}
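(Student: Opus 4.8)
The plan is to prove Theorem B by a transfinite recursion of length $\kappa$ that builds the set $M$ point by point, using the classical (countable) Pt\'ak lemma, Lemma \ref{Ptak II}, as the engine and the cardinal hypothesis to fuel a reflection argument. Throughout, call $A\subseteq S$ \emph{free} if $[A]^{<\omega}\subseteq\F$, so that the goal is the existence of a free set of size $\kappa$. It is convenient, for a free set $A$, to pass to its \emph{link} $\F^A:=\{G\in[S\setminus A]^{<\omega}\colon A\cup G\text{ is free}\}$, again a hereditary family on $S\setminus A$, and to track the quantity $\delta(\mathcal G):=\inf\{\sup_{G\in\mathcal G}\mu(G)\colon \mu\text{ a convex mean}\}$ for the various hereditary families $\mathcal G$ that arise. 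With this notation the hypothesis $(\dagger)$ reads $\delta(\F)=\delta>0$, and the recursion will be organised around maintaining a lower bound $\delta(\mathcal G)\geq\delta$ for the relativised families it produces on a shrinking system of ``live'' ground sets.

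First I would record the one ingredient that is \emph{automatic}: the behaviour at limit stages. If $\langle A_\alpha\rangle_{\alpha<\lambda}$ is an increasing chain of free sets whose associated $\delta$-value stays $\geq\delta$ on a decreasing system of ground sets, then the same bound survives the passage to $A_\lambda=\bigcup_{\alpha<\lambda}A_\alpha$, \emph{regardless of the cofinality of $\lambda$}. The reason is a finiteness-of-supports pigeonhole: a convex mean $\mu$ is finitely supported, so among the finitely many subsets of ${\rm supp}(\mu)$ some fixed $G^*$ witnesses $\mu(G^*)\geq\delta$ cofinally often; since ``$A_\alpha\cup G^*$ is free'' is a \emph{decreasing} condition in $\alpha$, it then holds for every $\alpha<\lambda$, hence for $A_\lambda$. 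Regularity of $\kappa$ keeps the live sets of size $\kappa$ so long as only few points are discarded. Thus the whole difficulty is concentrated in the successor step.

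The successor step is where the cardinal arithmetic enters. Because $\lambda^\omega<\kappa$ for every $\lambda<\kappa$ and $\kappa$ is regular, one can build a continuous increasing tower $\langle N_\alpha\rangle_{\alpha<\kappa}$ of \emph{$\omega$-closed} elementary submodels $N_\alpha\prec(H(\theta),\in,\lhd)$, each of size $<\kappa$, with $\F,S,\delta\in N_0$ and $N_\alpha\cap\kappa\in\kappa$; the $\omega$-closure $N_\alpha^{\,\omega}\subseteq N_\alpha$ is possible precisely because the countable-power hypothesis keeps the relevant Skolem hulls of size $<\kappa$. Its purpose is twofold: any countable free set produced inside $N_\alpha$ is genuinely free in $V$ (its finite subsets and the witnessing countable data lie in $N_\alpha$), and convex means, being finitely supported, reflect faithfully between $N_\alpha$ and $V$. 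At stage $\alpha$ I would select a point $x_\alpha\in S$ outside $N_\alpha$ — a point ``generic'' over the model — and argue that, after discarding the set of points incompatible with $x_\alpha$, the link $\F^{A_\alpha\cup\{x_\alpha\}}$ still has $\delta$-value $\geq\delta$ on a set of size $\kappa$. The mechanism is contrapositive: were every admissible choice to fail, $N_\alpha$ would reflect the failure to a countable subfamily to which Lemma \ref{Ptak II} applies, and amalgamating the resulting countable obstructions would yield a single finitely supported convex mean $\mu$ on $S$ with $\sup_{F\in\F}\mu(F)<\delta$, contradicting $(\dagger)$.

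The hard part, and the place I expect the real work to lie, is the bookkeeping of this successor step. Unlike the benign limit stages, a single successor may be forced to discard as many as $\kappa$ points — witness a complete bipartite pattern, where fixing a point on one side rules out the entire other side — so one cannot naively insist on discarding only $<\kappa$ points per step. The genuine content is therefore to organise the discards coherently, so that the surviving live set stays of size $\kappa$ through all $\kappa$ stages, including limits of uncountable cofinality, and to verify that $\delta>0$ together with the reflection really does forbid getting stuck. This is exactly the step that must fail for $\omega_1$ under CH (Theorem A(ii)), and it is the one place where the full strength of $\lambda^\omega<\kappa$ — through the availability of $\omega$-closed elementary submodels of size $<\kappa$ — is indispensable. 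Once it is in place, $M:=\bigcup_{\alpha<\kappa}A_\alpha$ is a free set of size $\kappa$, as required.
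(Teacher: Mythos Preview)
Your proposal is a plan, not a proof, and the gap is exactly where you locate it yourself: the successor step. You correctly flag it as ``the hard part'' and ``the place [you] expect the real work to lie,'' but you do not carry it out. The contrapositive sketch you offer---reflect the failure of all admissible choices of $x_\alpha$ into $N_\alpha$, apply Lemma~\ref{Ptak II} to some unspecified countable subfamily, then ``amalgamate'' countably many obstructing convex means into a single one violating $(\dagger)$---is an aspiration, not an argument. You never state which first-order sentence is being reflected, what the countable family is, or how the amalgamation is performed; and your stated use of $\omega$-closure (``any countable free set produced inside $N_\alpha$ is genuinely free in $V$'') is vacuous, since freeness of a set is determined by its finite subsets and is therefore absolute anyway. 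Your own bipartite example shows that a single successor step can kill $\kappa$ live points, so the ``coherent organisation of discards'' that keeps the live set of size $\kappa$ through all $\kappa$ stages is not a bookkeeping detail but the entire content of the theorem---and it is absent. As written, nothing in your scheme visibly separates it from one that would equally ``prove'' the result for $\omega_1$ under CH, which Theorem~A(ii) forbids.

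The paper's proof takes an entirely different route and sidesteps all of this combinatorics by importing Banach-space and topological machinery. From $(\dagger)$ and Proposition~\ref{Prop: Ptak gives l1} one has $\ell_1(\kappa)\hookrightarrow C(\overline{\F})$; Haydon's Theorem~\ref{Haydon's th: l1(k) iff onto map} (the \emph{only} place the hypothesis $\lambda^\omega<\kappa$ is invoked) then yields a continuous surjection $\overline{\F}\to\{0,1\}^\kappa$; \v{S}apirovski\v{\i}'s theorem converts this into a closed $\mathcal{K}\subseteq\overline{\F}$ whose pseudo-weight at every point is $\geq\kappa$; finally, a short Zorn's-lemma argument shows that any $\subseteq$-maximal element $M\in\mathcal{K}$ has a local base indexed by $[M]^{<\omega}$, forcing $|M|\geq\kappa$, and Fact~\ref{Fact: from Fclosure to Ptak} finishes. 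No recursion and no bookkeeping: the desired $M$ is produced in one stroke as a maximal point of a compact space. If you want to pursue a direct combinatorial proof, you are in effect proposing to reprove Haydon's theorem in this special setting, and that is where the genuine work would have to go.
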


Note that, if $\mu$ is any infinite cardinal number, then $\kappa:=(2^\mu)^+$ satisfies the assumptions of Theorem B. Consequently, there are in ZFC arbitrarily large cardinal numbers for which Pt\'ak's lemma is true. Moreover, the smallest cardinal Theorem B applies to is $\mathfrak{c}^+$. The next corollary is also a consequence of the theorem.
\begin{corollary}${}$
\begin{romanenumerate}
    \item Pt\'ak's lemma is true for $(2^\mu)^+$, whenever $\mu$ is an infinite cardinal number.
    \item {\rm (CH)} Pt\'ak's lemma is true for $\omega_2$;
    \item {\rm (GCH)} If $\tau$ is a cardinal number with ${\rm cf}(\tau)>\omega$, then Pt\'ak's lemma is true for $\tau^+$.
\end{romanenumerate}
\end{corollary}

The paper is organised as follows: in Section \ref{Sec2 General} we shall present some general observations concerning the condition appearing in Pt\'ak's lemma. These considerations will, in particular, allow us to present the proof of Pt\'ak's original result and are based on the proof given in \cite{BHO newPtak}. In Section \ref{Sec3 Omega1} we shall prove Theorem A, while Section \ref{Sec4 Larger} is dedicated to the proof of Theorem B.

In conclusion to this section, we mention that the notation $\lambda^\omega$ refers to cardinal exponentiation. Moreover, all the topological spaces relevant for this note will be Hausdorff and we will tacitly assume this property throughout the paper. We refer, \emph{e.g.}, to \cite{Jech} for our notation on set theory and to \cite{ak, FHHMZ, HMVZ} for unexplained notation concerning Banach spaces.

%%%%%%%%%%%%%%%%%%%%%%%%%%%%%%%%%%%%%%%%%%%%%%%%%%%%%%%%%%%%%%%%%%%%%%%%%%%%%%%%%%%%%%%
%%%%%%%%%%%%%%%%%%%%%%%%%%%%%%%%%%%%%%%%%%%%%%%%%%%%%%%%%%%%%%%%%%%%%%%%%%%%%%%%%%%%%%%
\section{General remarks}\label{Sec2 General}
If $S$ is any set, a subset $A$ of $S$ can be naturally identified, via the correspondence $A\mapsto\chi_A$, with an element of the compact topological space $\{0,1\}^S$, endowed with the canonical product topology. Let us recall that, under this identification, if $A\in2^S$, a basis of neighborhoods of $A$ is given by the collection of sets
$$\left\{B\in2^S\colon F\subseteq B\subseteq S\setminus G\right\},$$
where $F$ and $G$ are finite subsets of $A$ and $S\setminus A$ respectively. Throughout our note, we shall make this identification and we shall not distinguish between the set $A$ and its characteristic function $\chi_A$. Therefore, when $\F\subseteq2^S$, we may consider the closure $\Fc$ of $\F$, in the product topology of $\{0,1\}^S$; henceforth, whenever we use the notation $\Fc$ it will be the product topology the one under consideration.\smallskip

In the case that $\F$ is an hereditary family, it is easily seen that $\Fc$ is an \emph{adequate compact}, in the sense of the following definition, first introduced by Talagrand \cite{Talagrand adeq1, Talagrand adeq2}. A family $\mathcal{G}\subseteq2^S$ is said to be \emph{adequate} if:
\begin{romanenumerate}
\item whenever $G\subseteq F$ and $F\in\mathcal{G}$, then $G\in\mathcal{G}$, \emph{i.e.}, $\mathcal{G}$ is hereditary;
\item if every finite subset of $G$ belongs to $\mathcal{G}$, then $G\in\mathcal{G}$ too.
\end{romanenumerate}
Conversely, every adequate family $\mathcal{G}$ can be expressed as $\Fc$, for some hereditary family of finite sets, namely $\F=\{F\in\mathcal{G}\colon|F|<\omega\}$; in particular, every adequate family is a closed subset of $\{0,1\}^S$. As it turns out, compact sets that originate from adequate families of sets are very fascinating objects in Functional Analysis and have been exploited in several important examples; let us refer, \emph{e.g.}, to \cite{Arg Arv Merc, AMN88, Arkhangelskii, benyaministarbird, Leiderman, Leiderman Sokolov, Plebanek, Talagrand adeq1, Talagrand adeq2} for a sample of some of these constructions.\smallskip

Our interest in adequate families originates from the following fact, a particular case of the observation that $\Fc$ is adequate, whenever $\F$ is hereditary.
\begin{fact}\label{Fact: from Fclosure to Ptak} Let $\F$ be an hereditary family and $M\in\Fc$. Then every finite subset of $M$ belongs to $\F$.
\end{fact}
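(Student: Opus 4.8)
The plan is to prove Fact~\ref{Fact: from Fclosure to Ptak} directly from the definition of the product topology, together with the explicit description of the neighborhood basis recalled at the beginning of this section. Let $M\in\Fc$ and let $E$ be an arbitrary finite subset of $M$; I must exhibit an element of $\F$ that contains $E$. The key observation is that the basic open set
$$U:=\left\{B\in2^S\colon E\subseteq B\right\}$$
is an open neighborhood of $M$ in $\{0,1\}^S$. Indeed, in the notation of the neighborhood basis described above, $U$ corresponds to the choice $F=E$ (a finite subset of $M$) and $G=\emptyset$, so $U$ is precisely the set of those $B$ whose characteristic function agrees with that of $M$ on the finite coordinate set $E$ by taking the value $1$ there.

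Since $M\in\Fc$ is in the closure of $\F$, every neighborhood of $M$---and $U$ in particular---must intersect $\F$. Hence there exists some $F_0\in\F$ with $F_0\in U$, that is, $E\subseteq F_0$. Now I invoke the hypothesis that $\F$ is hereditary: from $E\subseteq F_0$ and $F_0\in\F$ it follows immediately that $E\in\F$. As $E$ was an arbitrary finite subset of $M$, this establishes that every finite subset of $M$ belongs to $\F$, which is the desired conclusion.

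I do not anticipate any genuine obstacle here; the statement is essentially a reformulation of the meaning of closure in the product topology, and the only two ingredients are the explicit neighborhood basis and the heredity of $\F$. The one point deserving a word of care is the translation between a finite set $E$ and the corresponding basic open condition on characteristic functions, namely the identification of $E\subseteq B$ with the requirement $\chi_B(s)=1$ for all $s\in E$; once this is made explicit the argument is a one-line application of the definition of a limit point. This Fact is exactly the bridge that will let us replace the combinatorial conclusion of Pt\'ak's lemma (that every finite subset of some large $M$ lies in $\F$) by the topological statement that such an $M$ belongs to the adequate compact $\Fc$, which is the viewpoint adopted in the subsequent sections.
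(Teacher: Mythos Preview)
Your proof is correct and is precisely the natural unpacking of what the paper only hints at: the paper states the Fact without proof, merely noting that it is ``a particular case of the observation that $\Fc$ is adequate, whenever $\F$ is hereditary.'' Your argument---using the basic open neighborhood $\{B:E\subseteq B\}$ to find $F_0\in\F$ with $E\subseteq F_0$, then invoking heredity---is exactly the verification behind that observation, so there is nothing to add.
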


The next proposition is the non-separable counterpart to the argument in \cite[Lemma 3.1]{BHO newPtak}, with the same proof, which we include for the sake of completeness.
\begin{proposition}\label{Prop: Ptak gives l1} Let $S$ be an infinite set and $\F\subseteq[S]^{<\omega}$ be an hereditary family such that $(\dagger)$ holds. Then $C(\Fc)$ contains an isomorphic copy of $\ell_1(S)$.
\end{proposition}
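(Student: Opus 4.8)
The plan is to produce the embedding by hand, taking the coordinate functions as the image of the canonical $\ell_1$-basis. For each $s\in S$ I would let $\pi_s\in C(\Fc)$ be the restriction to $\Fc$ of the $s$-th coordinate projection of $\{0,1\}^S$; thus $\pi_s(A)=\chi_A(s)$, and in particular, for a finite set $G\in\F$ viewed as a point of $\Fc$, one has $\pi_s(G)=1$ precisely when $s\in G$. Since $0\le\pi_s\le 1$, the assignment $e_s\mapsto\pi_s$ (where $(e_s)_{s\in S}$ is the canonical basis of $\ell_1(S)$) extends to a linear operator $T\colon\ell_1(S)\to C(\Fc)$, $Ta=\sum_{s\in S}a_s\pi_s$, of norm at most one: the series converges uniformly because $\sum_s|a_s|<\infty$ and $\|\pi_s\|_\infty\le1$, so its sum is continuous, and the upper estimate $\|Ta\|_\infty\le\|a\|_1$ is immediate. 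The whole content therefore reduces to a lower $\ell_1$-estimate, which I would aim to establish in the form $\|Ta\|_\infty\ge(\delta/2)\|a\|_1$.

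For the lower estimate I would first reduce to a finitely supported $a=(a_s)$, the general case following by density of the finitely supported vectors in $\ell_1(S)$ and continuity of both sides. Writing $a_s=a_s^+-a_s^-$ and setting $P:=\{s\colon a_s>0\}$, $N:=\{s\colon a_s<0\}$, I note that $\|a\|_1=\sum_s a_s^++\sum_s a_s^-$, so one of these two sums is at least $\frac12\|a\|_1$; assume without loss of generality $\sum_s a_s^+\ge\frac12\|a\|_1>0$ (the other case being symmetric). Then $\lambda(s):=a_s^+/\sum_t a_t^+$ is a convex mean with ${\rm supp}(\lambda)\subseteq P$, so $(\dagger)$ furnishes $F\in\F$ with $\lambda(F)\ge\delta$, i.e. $\sum_{s\in F}a_s^+\ge\delta\sum_t a_t^+$.

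Here lies the only real subtlety, a possible cancellation: the witnessing set $F$ may also meet $N$, so that $\sum_{s\in F}a_s$ could be far smaller than $\sum_{s\in F}a_s^+$. This is resolved exactly by the hereditariness of $\F$: the trimmed set $G:=F\cap P$ still belongs to $\F$, hence is a point of $\Fc$, and it carries no negative weight, so that
$$ (Ta)(G)=\sum_{s\in G}a_s=\sum_{s\in F}a_s^+\ge\delta\sum_t a_t^+\ge\frac{\delta}{2}\,\|a\|_1. $$
Thus $\|Ta\|_\infty\ge(\delta/2)\|a\|_1$ for finitely supported $a$, and by continuity for all $a\in\ell_1(S)$; combined with the upper bound this makes $T$ an isomorphic embedding, so $C(\Fc)$ contains an isomorphic copy of $\ell_1(S)$. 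I expect this hereditariness step — trimming the witnessing set to destroy the cancellation — to be the crux of the argument, whereas the uniform convergence of the defining series and the density reduction are routine.
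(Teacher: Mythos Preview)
Your proof is correct and follows essentially the same approach as the paper: the core estimate---splitting into positive and negative parts, normalising to a convex mean, invoking $(\dagger)$, and then using hereditariness to trim the witnessing set into the positive support---is exactly the argument in the paper, with the same constant $\delta/2$. The only cosmetic difference is that the paper first phrases the estimate as an equivalent norm on $c_{00}(S)$ and then passes to $C(\Fc)$ via a norming/duality argument, whereas you write down the embedding $e_s\mapsto\pi_s$ directly; the paper's Remark immediately after the proposition makes explicit that its embedding sends $e_s$ to your $\pi_s=\chi_{V_s}$, so the two routes land on the very same map.
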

\begin{proof} Let us preliminarily note that if $\lambda$ is any convex mean on $S$, then $\sup_{F\in\F}\lambda(F)\geq\delta$; since this supremum is actually over the finite set consisting of all $F\subseteq{\rm supp}(\lambda)$, it follows that there exists $F\in\F$ with $\lambda(F)\geq\delta$. Consequently, for every finitely supported function $\lambda\colon S\to[0,\infty)$ there exists $F\in\F$ such that
$$\sum_{s\in F}\lambda(s)\geq\delta\cdot\sum_{s\in S}\lambda(s).$$

For an element $x=(x(s))_{s\in S}\in c_{00}(S)$, let us define 
$$\|x\|:=\sup\left\{\left|\sum_{s\in F}x(s)\right| \colon F\in\F\right\};$$
we claim that $\n$ is a norm on $c_{00}(S)$, equivalent to the $\n_1$ norm. In order to prove this, fix $x\in c_{00}(S)$ and let $P$ be the finite set $P:=\{s\in S\colon x(s)>0\}$; up to replacing $x$ with $-x$, we may assume without loss of generality that
$$\sum_{s\in P}x(s)\geq\frac{1}{2}\sum_{s\in S}|x(s)|.$$
Moreover, our assumption implies the existence of $F\in\F$, with $F\subseteq P$, such that
$$\delta\cdot\sum_{s\in P}x(s)\leq\sum_{s\in F}x(s).$$
Consequently, we obtain
$$\frac{\delta}{2}\cdot\sum_{s\in S}|x(s)|\leq \delta\cdot\sum_{s\in P}x(s)\leq \sum_{s\in F}x(s)\leq \|x\|,$$
which proves our claim. In particular, the completion $X$ of $(c_{00}(S),\n)$ is isomorphic to $\ell_1(S)$.\smallskip

Associated with $F\in2^S$ there is a naturally defined functional $F^*\in X^*$, given by $F^*x:=\sum_{s\in F}x(s)$; note that $F^*$ is well defined for every $F\subseteq S$ in light of the fact that $X$ is isomorphic to $\ell_1(S)$. It is also clear from the definition of $\n$ that $F^*\in B_{X^*}$, whenever $F\in\F$. Moreover, the correspondence $F\mapsto F^*$ defines a function $\Phi\colon\{0,1\}^S\to(X^*,w^*)$, which is easily seen to be continuous and, of course, injective. It readily follows that $\Phi$ establishes an homeomorphism between $\Fc\subseteq\{0,1\}^S$ and $\overline{\F^*}^{w^*}\subseteq B_{X^*}$, where $\F^*:=\Phi(\F)$.\smallskip

Finally, it is a standard fact that $X$ isometrically embeds into $C\left(\overline{\F^*}^{w^*}\right)=C\left(\Fc\right)$, as a consequence of $\F^*$ clearly being $1$-norming for $X$. The fact that $X$ is isomorphic to $\ell_1(S)$ then allows us to conclude the proof. 
\end{proof}

\begin{remark}\label{Rmk: position of l1} Since the argument is completely direct, it is actually possible to keep track of the various embeddings and localise precisely the position of $\ell_1(S)$ into $C\left(\Fc\right)$; more precisely, it is possible to describe the vectors in $C\left(\Fc\right)$ that correspond to the canonical basis of $\ell_1(S)$.

For $s\in S$, let us denote by $\pi_s:\{0,1\}^S\to\{0,1\}$ the canonical projection and let $V_s$ be the clopen set
$$V_s:=\pi_s^{-1}(\{1\})\cap\Fc=\left\{F\in\Fc\colon s\in F\right\}.$$
Inspection of the proof of the previous proposition shows that, assuming $(\dagger)$, the collection $\left(\chi_{V_s}\right)_{s\in S}\subseteq C\left(\Fc\right)$ is equivalent to the canonical basis of $\ell_1(S)$.

A simple modification of an argument given in the course of the proof of Theorem A(ii) will also prove the validity of the converse implication.
\end{remark}

\begin{remark} From the appearance of Rosenthal's celebrated paper \cite{Rosenthal l1}, a well known criterion to prove that a family $(f_\alpha)_{\alpha<\tau}\subseteq B_{C(K)}$ is equivalent to the canonical basis of $\ell_1(\tau)$ consists in showing that, for some reals $r$ and $\delta>0$, the collection of sets
$$\left(\{f_\alpha\leq r\},\{f_\alpha\geq r+\delta\}\right)_{\alpha<\tau}$$
is independent. (We refer to \cite[Proposition 4]{Rosenthal l1} for the definition of the notion of independence and for the simple proof of this claim.) It is perhaps of interest to note that the copy of $\ell_1(S)$ obtained in Proposition \ref{Prop: Ptak gives l1} does not originate from such criterion, unless we are in the trivial case that $\F=[S]^{<\omega}$.

In fact, if there were reals $r$ and $\delta>0$ such that
$$\left(\{\chi_{V_s}\leq r\},\{\chi_{V_s}\geq r+\delta\}\right)_{s\in S}$$ 
is independent, then this would imply that $(V_s^\complement{} ,V_s)_{s\in S}$ is an independent family. As a consequence, for distinct $s_1,\dots,s_n\in S$ we would have
$$\emptyset\neq V_{s_1}\cap\dots\cap V_{s_n}=\left\{F\in\Fc\colon \{s_1,\dots,s_n\} \subseteq F\right\};$$
it would follow from this and $\Fc$ being hereditary that $\{s_1,\dots,s_n\}\in\F$, hence $\F=[S]^{<\omega}$.  
\end{remark}

In conclusion to this section, let us record how the results presented so far imply the validity of the original statement of Pt\'ak's lemma.
\begin{proof}[Proof of Lemma \ref{Ptak II}] We start observing that, without loss of generality, we can assume $|S|=\omega$. Let us, in fact, consider a subset $S_1$ of $S$ such that $|S_1|=\omega$ and the hereditary family $\F\cap S_1:=\{F\in\F\colon F\subseteq S_1\}$. If $\lambda$ is any convex mean on $S_1$, we may extend it to $S$ in the obvious (and unique) way; plainly, if $F\in\F$, $\lambda(F)=\lambda(F\cap S_1)$, where $F\cap S_1\subseteq\F\cap S_1$. Consequently, up to replacing $S$ with $S_1$ and $\F$ with $\F\cap S_1$, we may assume that $|S|=\omega$.\smallskip

Now, Proposition \ref{Prop: Ptak gives l1} yields that $C(\Fc)$ contains a copy of $\ell_1$, which in turn implies that $C(\Fc)$ is not an Asplund space. As a consequence of this, $\Fc$ is necessarily uncountable and it can not be a subset of the countable set $[S]^{<\omega}$. Fact \ref{Fact: from Fclosure to Ptak} leads us to the desired conclusion.
\end{proof}

%%%%%%%%%%%%%%%%%%%%%%%%%%%%%%%%%%%%%%%%%%%%%%%%%%%%%%%%%%%%%%%%%%%%%%%%%%%%%%%%%%%%%%%
%%%%%%%%%%%%%%%%%%%%%%%%%%%%%%%%%%%%%%%%%%%%%%%%%%%%%%%%%%%%%%%%%%%%%%%%%%%%%%%%%%%%%%%
\section{Pt\'ak's lemma for $\omega_1$}\label{Sec3 Omega1}
This section is dedicated to the proof of Theorem A; both clauses will heavily depend on results from \cite{AMN88}. The proof of claim (i) is essentially the same argument as in the proof of Lemma \ref{Ptak II} given above, but with the Asplund property being replaced by the WLD one. Let us recall a bit of terminology, in order to explain this.\smallskip

A compact space $K$ has property $(M)$ if every regular Borel measure on $K$ has separable support. Here, for the \emph{support} of a Borel measure $\mu$ on $K$, we understand the closed set
$${\rm supp}(\mu):=\{x\in K\colon |\mu|(U)>0\text{ for every neighborhood }U\text{ of }x\}.$$
A topological space $K$ is a \emph{Corson compact} whenever it is homeomorphic to a compact subset $C$ of the product space $[-1,1]^\Gamma$ for some set $\Gamma$, such that every element of $C$ has only countably many non-zero coordinates. A Banach space $X$ is \emph{weakly Lindel\"of determined} (hereinafter, \emph{WLD}) if the dual ball $B_{X^*}$ is a Corson compact in the relative $w^*$-topology.

We shall need the following topological characterisation of WLD Banach spaces of continuous functions, due to Argyros, Mercourakis, and Negrepontis \cite[Theorem 3.5]{AMN88} (we also refer to the same paper, or to \cite{HMVZ, KKLP, Kalenda survey}, for more on Corson compact and WLD spaces).
\begin{theorem}[Argyros, Mercourakis, and Negrepontis, \cite{AMN88}] Let $K$ be a compact topological space. Then $C(K)$ is WLD if and only if $K$ is a Corson compact with property (M).
\end{theorem}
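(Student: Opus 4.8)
My plan is to prove both implications directly from the definition that $C(K)$ is WLD precisely when $(B_{C(K)^*},w^*)$ is a Corson compact, using throughout the Riesz identification of $C(K)^*$ with the regular Borel measures on $K$, and exploiting two standard structural features of Corson compacta: every \emph{separable} Corson compact is metrizable, and every Corson compact has countable tightness (indeed is Fr\'echet--Urysohn). Since a closed subspace of a Corson compact is again Corson, I will move freely between $K$, the probability measures, and the dual ball.

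For the sufficiency of the two conditions, suppose $K\subseteq[-1,1]^\Gamma$ is Corson, with continuous coordinate projections $\pi_\gamma\in C(K)$, and that $K$ has property (M). First I would note that the finite products $f_t:=\prod_{\gamma\in t}\pi_\gamma$, for $t\in[\Gamma]^{<\omega}$ and with $f_\emptyset:=1$, are linearly dense in $C(K)$: the $\pi_\gamma$ separate the points of $K$, so Stone--Weierstrass applies to the subalgebra they generate. The decisive point is then that property (M) forces each measure to be \emph{countably supported} with respect to this system. Indeed, for a positive $\mu$ its support is separable, hence metrizable as a separable Corson compact; a separable subset of the $\Sigma$-product meets only countably many coordinate axes nontrivially, so $\Gamma_0:=\{\gamma\colon\pi_\gamma\text{ is not identically }0\text{ on }\mathrm{supp}\,\mu\}$ is countable, and consequently $\int f_t\,d\mu\neq0$ only when $t\subseteq\Gamma_0$, leaving countably many indices $t$. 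Decomposing an arbitrary measure into its positive, negative, real and imaginary parts, the map $\nu\mapsto(\int f_t\,d\nu)_{t\in[\Gamma]^{<\omega}}$ therefore sends $B_{C(K)^*}$ into the $\Sigma$-product over $[\Gamma]^{<\omega}$; since the $f_t$ are total this map is an injective $w^*$-continuous map from a compact space, hence a homeomorphism onto its image, exhibiting $B_{C(K)^*}$ as a Corson compact.

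For the necessity, assume $C(K)$ is WLD. That $K$ is Corson is immediate, since $x\mapsto\delta_x$ embeds $K$ homeomorphically as a closed subset of the Corson compact $(B_{C(K)^*},w^*)$. For property (M) I would argue through countable tightness, and it suffices to treat a probability measure $\mu$, a general measure being a combination of such. The finitely supported probability measures are $w^*$-dense in the compact convex set $\mathcal{P}(K)$ of all probability measures (their convex hull contains every Dirac mass and is dense by Krein--Milman), so $\mu$ lies in the $w^*$-closure of this set. As $\mathcal{P}(K)$ is a closed subset of $B_{C(K)^*}$ it is Corson, hence of countable tightness, and therefore $\mu$ already lies in the $w^*$-closure of some \emph{countable} family $D_0$ of finitely supported measures. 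Letting $F_0$ be the closure of the countable union of the supports of the members of $D_0$, the set $\mathcal{P}(F_0)$ of probability measures carried by $F_0$ is $w^*$-closed and contains $D_0$, hence contains $\mu$; thus $\mathrm{supp}\,\mu\subseteq F_0$ is separable.

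The step I expect to demand the most care is the sufficiency direction, and specifically the point that property (M) cannot be dispensed with: there do exist Corson compacta carrying measures of non-separable support, so Corson-ness alone never yields the countable support of measures required for the $\Sigma$-embedding. The argument above isolates exactly where (M) enters, namely in passing from \emph{separable} support to \emph{countably many active coordinates} through metrizability, and it is this implication, together with the two black-box facts about Corson compacta (metrizability of the separable ones and countable tightness), on which the whole proof rests. The remaining verifications---totality via Stone--Weierstrass, the $w^*$-closedness of $\mathcal{P}(F_0)$, and the reduction to probability measures---are routine.
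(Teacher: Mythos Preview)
The paper does not give its own proof of this theorem: it is quoted from \cite{AMN88} as a black box and then applied in the proof of Theorem~A(i). So there is no argument in the paper to compare yours against; I can only assess your proof on its own terms.

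Your overall strategy is sound and is indeed close to the original approach in \cite{AMN88}: for the sufficiency you embed $B_{C(K)^*}$ into a $\Sigma$-product by integrating a total family of functions manufactured from the coordinate projections, using property~(M) to force each measure to involve only countably many coordinates; for the necessity you use countable tightness of Corson compacta on $\mathcal{P}(K)$ to pull $\mu$ back onto a separable closed set. The necessity direction is correct as written.

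There is one genuine, if easily repaired, slip in the sufficiency direction. You claim that the products $f_t:=\prod_{\gamma\in t}\pi_\gamma$, indexed by finite \emph{sets} $t\in[\Gamma]^{<\omega}$, are linearly dense in $C(K)$ by Stone--Weierstrass. But Stone--Weierstrass gives density of the \emph{subalgebra} generated by the $\pi_\gamma$ and the constants, and that subalgebra is spanned by all monomials $\prod_\gamma\pi_\gamma^{n_\gamma}$ with arbitrary exponents, not only exponents $0$ or $1$. Over a general $K\subseteq[-1,1]^\Gamma$ the square-free products do not separate measures (already on $K=[-1,1]$ the family $\{1,x\}$ fails), so your embedding need not be injective. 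The fix is immediate: index instead by finitely supported multi-indices $\alpha\in\mathbb{N}_0^{(\Gamma)}$ and set $f_\alpha:=\prod_\gamma\pi_\gamma^{\alpha(\gamma)}$. These are genuinely total, and your countability argument survives intact since $\int f_\alpha\,d\mu\neq 0$ still forces $\mathrm{supp}\,\alpha\subseteq\Gamma_0$, and $\mathbb{N}_0^{(\Gamma_0)}$ is countable when $\Gamma_0$ is. With this correction your proof goes through.
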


The r\^ole of Martin's axiom MA$_{\omega_1}$ in connection with the above result is that, under MA$_{\omega_1}$, every Corson compact has property (M) (\emph{cf.} \cite[Remark 3.2.3]{AMN88} or \cite[Theorem 5.62]{HMVZ}). More precisely, recall that a compact space $K$ satisfies the \emph{countable chain condition} (\emph{ccc}, for short) if every collection of non-empty disjoint open sets in $K$ is at most countable. It is clear that if $\mu$ is a regular Borel measure on $K$, then the support ${\rm supp}(\mu)$ of $\mu$ is ccc. The previous claim then follows from the fact that, under MA$_{\omega_1}$, every ccc Corson compact is separable (\cite[p. 201, Theorem (b)]{ComNegr chain}, or \cite[p. 207, Exercise (i)]{Fremlin MA}).\smallskip

The combination of the above considerations assures us that, assuming MA$_{\omega_1}$, a compact space $K$ is Corson if and only if $C(K)$ is WLD; by means of this equivalence, we may now readily prove the first part of Theorem A.

\begin{proof}[Proof of Theorem A(i)] According to Proposition \ref{Prop: Ptak gives l1}, $C(\Fc)$ contains an isomorphic copy of $\ell_1(\omega_1)$ and, therefore, it fails to be WLD. Consequently, $\Fc$ is not Corson and it follows immediately that there exists $M\in\Fc$ with $|M|\geq\omega_1$; in fact, if this were false, then the inclusion map $\Fc\subseteq[0,1]^{\omega_1}$ would witness the fact that $\Fc$ is Corson. We may therefore apply Fact \ref{Fact: from Fclosure to Ptak} and conclude the proof.
\end{proof}

As it turns out, assuming some additional set-theoretic axioms is necessary for the validity of the results described above. In particular, the Continuum Hypothesis allows for the construction of Corson compact failing property (M). The first such example was constructed by Kunen in \cite{Kunen corson CH} and one its generalisation, the Kunen--Haydon--Talagrand example, is described in \cite[\S 5]{Negr84}, combining Kunen's construction with Haydon's and Talagrand's examples, \cite{Haydon counterexample, Talagrand corson CH}. Such compact $K$ also has the property that $C(K)$ fails to contain an isomorphic copy of $\ell_1(\omega_1)$. One simpler example, still under CH, is based upon the Erd\H{o}s space and may be found in \cite[Theorem 3.12]{AMN88} or \cite[Theorem 5.60]{HMVZ}. Interestingly, if the Corson compact $K$ is an adequate compact, then $K$ fails to have property (M) if and only if $C(K)$ contains an isomorphic copy of $\ell_1(\omega_1)$ \cite[Theorem 3.13]{AMN88}; in other words, $C(K)$ contains $\ell_1(\omega_1)$, whenever it fails to be WLD.\smallskip

The proof of claim (ii) in Theorem A that we shall give presently will also implicitly depend on the Erd\H{o}s space and is based on a combination of arguments from \cite[Theorems 3.12, 3.13]{AMN88}.
\begin{proof}[Proof of Theorem A(ii)] The assumption of the validity of the Continuum Hypothesis allows us to enumerate in an $\omega_1$-sequence $(K_\alpha)_{\alpha<\omega_1}$ the collection of all compact subsets of $[0,1]$ with positive Lebesgue measure (which, in what follows, we shall denote $\mathscr{L}$). We may also let $(x_\alpha)_{\alpha<\omega_1}$ be a well ordering of the interval $[0,1]$. Since the set $K_\alpha\cap\{x_\beta\}_{\alpha\leq\beta<\omega_1}$ has positive measure, the regularity of $\mathscr{L}$ allows us to select a compact subset $C_\alpha$ of $K_\alpha\cap\{x_\beta\}_{\alpha\leq\beta<\omega_1}$ such that $\mathscr{L}(C_\alpha)>0$, for each $\alpha<\omega_1$. Note that if $A\subseteq\omega_1$ is any uncountable set, then $\sup A=\omega_1$ and it follows that 
$$\bigcap_{\alpha\in A}C_\alpha\subseteq \bigcap_{\alpha\in A}\{x_\beta\}_{\alpha\leq \beta<\omega_1}=\emptyset.$$

We are now in position to define a Corson compact that fails property (M). Consider the set 
$$\mathcal{A}:=\left\{A\subseteq\omega_1\colon \bigcap_{\alpha\in A}C_\alpha\neq \emptyset\right\};$$
if every finite subset of a given set $A$ belongs to $\mathcal{A}$, then the collection of closed sets $\{C_\alpha\}_{\alpha\in A}$ has the finite intersection property and $A\in\mathcal{A}$ follows by compactness. Consequently, $\mathcal{A}$ is an adequate compact. Moreover, the previous consideration shows that every $A\in\mathcal{A}$ is a countable subset of $\omega_1$, whence $\mathcal{A}$ is a Corson compact. The proof that $\mathcal{A}$ fails to have property (M) may be found in \cite[Theorem 3.12]{AMN88}, or \cite[Theorem 5.60]{HMVZ} and we shall not reproduce it here. Therefore, we may fix a positive regular Borel measure $\mu$ on $\mathcal{A}$, whose support is not separable.\smallskip

We now consider again the clopen subsets of $\mathcal{A}$ (\emph{cf.} Remark \ref{Rmk: position of l1})
$$V_\alpha:=\pi_\alpha^{-1}(\{1\})\cap\mathcal{A}=\{A\in\mathcal{A}\colon\alpha\in \mathcal{A}\}\qquad(\alpha<\omega_1)$$
and we shall consider the set $I:=\{\alpha<\omega_1\colon\mu(V_\alpha)>0\}$. Plainly, for $A\in{\rm supp}(\mu)$, we have $\mu(V_\alpha)>0$ whenever $\alpha\in A$; consequently, $A\subseteq I$ and we obtain that ${\rm supp}(\mu)\subseteq2^I$. In light of the fact that the support of $\mu$ is not separable, it follows that $I$ is uncountable. (Here, we are using the fact that every subspace of a separable Corson compact is separable, immediate consequence of the easy observation that separable Corson compact are indeed metrisable.) In turn, we also obtain the existence of an uncountable subset $S$ of $I$ and a real $\delta>0$ such that $\mu(V_\alpha)>\delta$ for $\alpha\in S$.\smallskip

We may now define the desired hereditary family of finite sets: let us consider $\F_0:=\{F\in\mathcal{A}\colon F\text{ is a finite set}\}$ and set $\F:=\{F\in\F_0 \colon F\subseteq S\}$. Clearly, $\Fc\subseteq\overline{\mathcal{F}_0} =\mathcal{A}$; we infer, in particular, that $\Fc$ contains no uncountable set and the conclusion of Pt\'ak's lemma for the cardinal number $\omega_1$ fails to hold for $\F$.

On the other hand, for every convex mean $\lambda$ on $S$ we have
$$\left\Vert\sum_{s\in S}\lambda(s)\chi_{V_s}\right\Vert_{C(\mathcal{A})}\geq \mu\left(\sum_{s\in S}\lambda(s)\chi_{V_s}\right)=\sum_{s\in S}\lambda(s)\mu(V_s)>\delta\cdot\sum_{s\in S}\lambda(s)=\delta.$$
From this strict inequality and $\overline{\mathcal{F}_0}=\mathcal{A}$, we conclude the existence of $F\in\F_0$ such that 
$$\sum_{s\in S}\lambda(s)\chi_{V_s}(F)>\delta;$$
therefore,
$$\delta<\sum_{s\in S}\lambda(s)\chi_{V_s}(F)=\sum_{s\in F\cap S}\lambda(s) =\lambda(F\cap S)\leq\sup_{G\in\F}\lambda(G)$$
and we see that $\F$ satisfies $(\dagger)$.
\end{proof}

%%%%%%%%%%%%%%%%%%%%%%%%%%%%%%%%%%%%%%%%%%%%%%%%%%%%%%%%%%%%%%%%%%%%%%%%%%%%%%%%%%%%%%%
%%%%%%%%%%%%%%%%%%%%%%%%%%%%%%%%%%%%%%%%%%%%%%%%%%%%%%%%%%%%%%%%%%%%%%%%%%%%%%%%%%%%%%%
\section{Larger cardinals}\label{Sec4 Larger}
In this section we are going to prove Theorem B; before doing this, it will be convenient to recall some results that we shall make us of in the course of the argument.\smallskip

A topological space is \emph{totally disconnected} if every non-empty connected subset is a singleton. Clearly, topological products and subspaces of totally disconnected spaces are totally disconnected.

For a topological space $(X,\mathcal{T})$ and a point $x\in X$, a \emph{local $\pi$-basis} for $x$ (\emph{cf.} \cite[\S 1.15]{Juhasz}) is a family $\mathcal{B}$ of non-empty open subsets of $X$ such that for every neighborhood $V$ of $x$ there exists $B\in\mathcal{B}$ with $B\subseteq V$ (note that $B$ is not required to contain $x$). Every local basis is a local $\pi$-basis, \emph{a fortiori}. The \emph{pseudo-weight} of $(X,\mathcal{T})$ at $x$ is the minimal cardinality of a local $\pi$-basis for $x$.\smallskip

The first ingredient we need is the following result, due to \v{S}apirovski\v{\i}, \cite{Sapirovskii} (see, \emph{e.g.}, \cite[\S 3.18]{Juhasz} or \cite[Theorem 2.11]{Negr84}).
\begin{theorem}[\v{S}apirovski\v{\i}] Let $K$ be a totally disconnected compact topological space and $\kappa$ be an infinite cardinal number. Then there exists a continuous function from $K$ onto $\{0,1\}^\kappa$ if and only if there exists a non-empty closed subset $F$ of $K$ such that the pseudo-weight of $F$ at $x$ is at least $\kappa$, for every $x\in F$.
\end{theorem}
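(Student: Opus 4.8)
\emph{Proof proposal.} The two implications are of rather different natures, so I would treat them separately, and I would only use total disconnectedness in the ``if'' direction. For the ``only if'' direction, suppose $f\colon K\to\{0,1\}^\kappa$ is a continuous surjection. A routine Zorn's lemma argument---using that a continuous image of a compactum is closed, and that a decreasing chain of closed sets each mapping onto $\{0,1\}^\kappa$ still maps onto it, by compactness of the fibres---produces a closed $F\subseteq K$ that is minimal with respect to $f(F)=\{0,1\}^\kappa$; equivalently, $f|_F$ is \emph{irreducible}. I would then use the standard reformulation of irreducibility: for every non-empty open $U\subseteq F$ the set $V:=\{0,1\}^\kappa\setminus f(F\setminus U)$ is non-empty, open, and satisfies $f^{-1}(V)\cap F\subseteq U$. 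Given $x\in F$ and a local $\pi$-basis $\mathcal{B}$ for $x$ in $F$, the corresponding sets $V_B$ then form a local $\pi$-basis for $f(x)$ in $\{0,1\}^\kappa$: if $W$ is a neighbourhood of $f(x)$, then $f^{-1}(W)\cap F$ is a neighbourhood of $x$, so some $B\in\mathcal{B}$ has $B\subseteq f^{-1}(W)\cap F$, whence $V_B=f(f^{-1}(V_B)\cap F)\subseteq f(f^{-1}(W)\cap F)=W$ by surjectivity of $f|_F$. Since the pseudo-weight of $\{0,1\}^\kappa$ at every point is at least $\kappa$---a free-coordinate argument, as fewer than $\kappa$ basic open sets constrain fewer than $\kappa$ coordinates, and the neighbourhood fixing an unconstrained coordinate then contains none of them---this yields $|\mathcal{B}|\geq\kappa$, i.e.\ the pseudo-weight of $F$ at $x$ is at least $\kappa$ for every $x\in F$.

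For the ``if'' direction, I would first reduce to working inside $F$. A clopen subset of $F$ extends to a clopen subset of $K$, because in a compact (hence strongly) zero-dimensional space disjoint closed sets are separated by a clopen set; consequently any continuous map $F\to\{0,1\}^\kappa$---that is, a $\kappa$-indexed family of clopen subsets of $F$---extends coordinatewise to $K$, and an onto map stays onto. It therefore suffices to produce a continuous surjection $F\to\{0,1\}^\kappa$, equivalently an \emph{independent} family $(U_\xi)_{\xi<\kappa}$ of clopen subsets of $F$, meaning one for which all finite Boolean boxes $\bigcap_{\xi\in A}U_\xi\cap\bigcap_{\xi\in B}(F\setminus U_\xi)$, with $A,B\in[\kappa]^{<\omega}$ disjoint, are non-empty.

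I would build such a family by transfinite recursion on $\xi<\kappa$, maintaining independence at each stage. At stage $\alpha<\kappa$ the clopen sets chosen so far generate a Boolean subalgebra $\mathbb{A}_\alpha$, generated by fewer than $\kappa$ elements and hence of size $<\kappa$; let $q\colon F\to Z_\alpha$ be the quotient onto its Stone space, a totally disconnected compactum of weight $<\kappa$. Adjoining a clopen $U_\alpha$ while preserving independence amounts \emph{exactly} to choosing $U_\alpha$ that \emph{splits every fibre} of $q$, i.e.\ $q(U_\alpha)=q(F\setminus U_\alpha)=Z_\alpha$. This is where the pseudo-weight hypothesis enters decisively: if some fibre $q^{-1}(z)$ were a singleton $\{x\}$, then the boxes containing $x$ would intersect to $\{x\}$ and hence (in a compactum) form a local basis at $x$ of size $<\kappa$, forcing the pseudo-weight of $F$ at $x$ to be $<\kappa$, a contradiction. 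More generally, since $|\mathbb{A}_\alpha|<\kappa$, the family $\mathbb{A}_\alpha$ cannot be a local $\pi$-basis at any point of $F$, and this is precisely the room one needs.

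The main obstacle---and the genuine combinatorial content of \v{S}apirovski\v{\i}'s theorem---is exactly this \emph{simultaneous} splitting lemma: that the failure of $\mathbb{A}_\alpha$ to be a local $\pi$-basis at any point of $F$ yields a \emph{single} clopen set meeting every fibre of $q$ in both $U_\alpha$ and its complement. Passing from the pointwise non-degeneracy of the fibres to one global splitter requires a uniformisation over the weight-$<\kappa$ base $Z_\alpha$, and I expect this to be the technically delicate step; in the write-up I would isolate it as a lemma and prove it either by a Cantor-scheme construction of nested clopen sets, or, equivalently, by a genericity argument over an elementary submodel of size $<\kappa$ containing all the relevant data. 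Finally, I would attend to the bookkeeping ensuring that the recursion survives limit stages---and, in particular, the case of \emph{singular} $\kappa$, where the fewer-than-$\kappa$ constraints accumulated below a limit must still be simultaneously satisfiable; this is the price of the regularity-free formulation of the statement.
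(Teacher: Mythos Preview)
The paper does not contain a proof of this theorem: \v{S}apirovski\v{\i}'s result is merely quoted, with references to \cite{Sapirovskii}, \cite[\S 3.18]{Juhasz}, and \cite[Theorem 2.11]{Negr84}, and then used as a black box in the proof of Theorem~B. There is therefore no ``paper's own proof'' to compare your proposal against.

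That said, a brief assessment of your sketch on its own merits. Your ``only if'' direction is complete and correct: passing to an irreducible restriction via Zorn's lemma and then pushing a local $\pi$-basis forward through the irreducible map is exactly the standard argument, and your verification that the pseudo-weight of $\{0,1\}^\kappa$ at every point is $\kappa$ is fine and does not require regularity of $\kappa$.

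Your ``if'' direction is a sound strategic outline, and your reduction of the inductive step to finding a clopen set splitting every fibre of the quotient onto the Stone space of the algebra built so far is accurate. However, as you yourself flag, the existence of such a simultaneous splitter is precisely the substantive content of \v{S}apirovski\v{\i}'s theorem, and you have not proved it; you have only named two possible routes (a Cantor-scheme of nested clopens, or an elementary-submodel argument). Your observation that no fibre is a singleton is correct but far from sufficient: one needs that no fibre is \emph{nowhere dense} in itself in a suitable sense, and turning the pointwise hypothesis ``$\mathbb{A}_\alpha$ is not a local $\pi$-basis at any point'' into a single global clopen splitter genuinely requires the combinatorics that the cited references supply. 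So as it stands this direction is a well-oriented plan with its central lemma left open, rather than a proof.
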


The second building block for our proof is a characterisation, due to Richard Haydon, of those compact spaces whose associated Banach space of continuous functions contains an isomorphic copy of $\ell_1(\kappa)$, for a certain cardinal number $\kappa$. Let us, briefly review some results in this area. 

Pe\l czy\'nski \cite{Pelczynski} and Hagler \cite{Hagler} proved that a Banach space $X$ contains an isomorphic copy of $\ell_1$ (let us write $\ell_1\embed X$, for short) if and only if $L_1[0,1]\embed X^*$. Pe\l czy\'nski also demonstrated that, for an infinite cardinal $\kappa$, $L_1\{0,1\}^\kappa\embed X^*$ whenever $\ell_1(\kappa)\embed X$ and he conjectured the validity of the converse implication. (Here, by $L_1\{0,1\}^\kappa$ we understand the Lebesgue space corresponding to $\{0,1\}^\kappa$, the Borel $\sigma$-algebra and the Haar measure on the compact group $\{0,1\}^\kappa$.) 

The complete solution to Pe\l czy\'nski's conjecture follows from a combination of results due to Argyros and Haydon: Haydon \cite{Haydon counterexample} proved that the conjecture is false for $\kappa=\omega_1$ and assuming CH. On the other hand, Argyros \cite{Argyros nonseparable} proved the correctness of the conjecture for $\kappa\geq\omega_2$ (in ZFC) and for $\kappa=\omega_1$, assuming MA$_{\omega_1}$. A different proof of Argyros' result can be obtained from \cite{Argyros Bourgain Zachariades}; let us also refer to \cite{Haydon Longhorn, Haydon quotient, Haydon survey, Negr84} for a discussion of these and related results.\smallskip

In a related direction, Talagrand \cite{Talagrand} (also see \cite{Argyros shortproof}, for a simplified proof) proved that, for a cardinal number $\kappa$ with ${\rm cf}(\kappa)\geq\omega_1$, $\ell_1(\kappa)\embed X$ if and only if there exists a continuous function from $(B_{X^*},w^*)$ onto $[0,1]^\kappa$. The result we shall need is a similar statement in the case that $X$ is a $C(K)$ space (\emph{cf}. \cite[Remark 2.5]{Haydon quotient}).
\begin{theorem}[Haydon]\label{Haydon's th: l1(k) iff onto map} Let $\kappa$ be a regular cardinal number such that $\lambda^\omega<\kappa$ whenever $\lambda<\kappa$ and let $K$ be a compact topological space. Then $\ell_1(\kappa)\embed C(K)$ if and only if there exists a continuous function from $K$ onto $[0,1]^\kappa$.
\end{theorem}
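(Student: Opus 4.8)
The plan is to prove the two implications separately: the converse (that a surjection yields $\ell_1(\kappa)$) is soft and needs neither regularity nor the arithmetic hypothesis, whereas the direct implication carries all the weight.

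\emph{Sufficiency of the surjection.} First I would assume a continuous surjection $\phi\colon K\to[0,1]^\kappa$ and note that the composition operator $g\mapsto g\circ\phi$ is an \emph{isometric} embedding of $C([0,1]^\kappa)$ into $C(K)$, surjectivity of $\phi$ being exactly what makes it isometric. It then suffices to locate $\ell_1(\kappa)$ inside $C([0,1]^\kappa)$, and for this I would use the coordinate projections $\pi_\alpha$: a direct computation gives $\|\sum_\alpha a_\alpha(\pi_\alpha-\tfrac12)\|_\infty=\tfrac12\sum_\alpha|a_\alpha|$ for every finitely supported scalar family, the extremum being attained at a point with all coordinates in $\{0,1\}$. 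Hence $(\pi_\alpha-\tfrac12)_{\alpha<\kappa}$ spans an isometric copy of $\ell_1(\kappa)$ (up to the factor $\tfrac12$), and $\ell_1(\kappa)\embed C([0,1]^\kappa)\embed C(K)$.

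\emph{Necessity.} Now suppose $\ell_1(\kappa)\embed C(K)$ and fix a normalised family $(f_\alpha)_{\alpha<\kappa}\subseteq B_{C(K)}$ equivalent to the unit vector basis. The object I would aim to construct is an \emph{independent family of pairs of disjoint closed sets} of full size $\kappa$: closed sets $F_\alpha^0,F_\alpha^1$ with $F_\alpha^0\cap F_\alpha^1=\emptyset$ and $\bigcap_{\alpha\in a}F_\alpha^0\cap\bigcap_{\beta\in b}F_\beta^1\neq\emptyset$ for all disjoint finite $a,b\subseteq\kappa$. Granting such a family, the surjection follows by a purely topological assembly: choosing Urysohn functions $g_\alpha\colon K\to[0,1]$ vanishing on $F_\alpha^0$ and equal to $1$ on $F_\alpha^1$, the independence together with the finite intersection property forces $\{0,1\}^\kappa\subseteq G(K)$, where $G=(g_\alpha)_\alpha\colon K\to[0,1]^\kappa$. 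Restricting $G$ to the closed set $F:=G^{-1}(\{0,1\}^\kappa)$ yields a continuous surjection $F\twoheadrightarrow\{0,1\}^\kappa$, which I would compose with a standard Cantor surjection $\{0,1\}^\kappa\twoheadrightarrow[0,1]^\kappa$ (splitting $\kappa$ into $\kappa$ countable blocks) and finally extend, coordinatewise and by Tietze's theorem, from the closed set $F$ to all of $K$, the extension remaining onto. This last step is precisely why the honest target is $[0,1]^\kappa$ rather than $\{0,1\}^\kappa$: a connected $K$ (say $K=[0,1]^\kappa$) admits no surjection onto the disconnected cube, so the disconnected structure must be found on a closed subset and only then spread over $K$ into the \emph{connected} cube.

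\emph{The main obstacle.} The heart of the matter, and the only place the hypotheses on $\kappa$ are used, is the extraction of the independent family. Rosenthal's independence criterion provides, on each countable subfamily, a sub-subfamily and reals $r,\delta$ for which the level sets $\{f_\alpha\le r\}$, $\{f_\alpha\ge r+\delta\}$ are independent; as there are only countably many rational pairs $(r,\delta)$ and $\kappa$ is regular, one pair serves a subset of size $\kappa$, so I may assume common levels. The hard part will be to promote independence obtained ``countably many indices at a time'' to a single independent family of full size $\kappa$, and this is exactly where the arithmetic assumption enters: regularity of $\kappa$ together with $\lambda^\omega<\kappa$ for every $\lambda<\kappa$ is precisely the hypothesis of the generalised $\Delta$-system (sunflower) lemma for a family of $\kappa$ many countable sets, which I would deploy—through a transfinite construction committing to only countably much data at each stage, or an equivalent elementary-submodel/free-set argument—to thin the index set to size $\kappa$ on which all finite Boolean intersections remain nonempty. (Alternatively, one could first invoke Pe\l czy\'nski's theorem to pass to a copy of $L_1\{0,1\}^\kappa$ in $C(K)^*$ and realise the independence at the level of measures, or phrase the target via \v{S}apirovski\v{\i}'s criterion as the existence of a closed subset of $K$ whose pseudo-weight is at least $\kappa$ at each of its points, then apply his theorem to secure $F\twoheadrightarrow\{0,1\}^\kappa$.) I expect this combinatorial promotion to be by far the most delicate ingredient, everything surrounding it being either standard Banach-space facts or the soft topology sketched above.
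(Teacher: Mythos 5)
First, a point of order: the paper contains \emph{no proof} of this statement. Theorem \ref{Haydon's th: l1(k) iff onto map} is imported verbatim from Haydon (\cite[Remark 2.5]{Haydon quotient}) and used as a black box in the proof of Theorem B, so your attempt must be judged against Haydon's theorem itself rather than against an internal argument. That said, the parts of your proposal that are actually worked out are correct. The sufficiency direction is complete: composition with a continuous surjection embeds $C([0,1]^\kappa)$ isometrically into $C(K)$, the computation $\bigl\Vert\sum_\alpha a_\alpha(\pi_\alpha-\tfrac12)\bigr\Vert_\infty=\tfrac12\sum_\alpha\vert a_\alpha\vert$ is right, and indeed no hypothesis on $\kappa$ is needed there. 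The assembly in the necessity direction is also sound: an independent family of $\kappa$ pairs of disjoint closed sets yields, via Urysohn functions and compactness, a continuous surjection of a closed subset of $K$ onto $\{0,1\}^\kappa$, then onto $[0,1]^\kappa$ by splitting $\kappa$ into countable blocks, and coordinatewise Tietze extension spreads the map over all of $K$; your remark about connectedness, explaining why the honest target is $[0,1]^\kappa$, is a nice touch.

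The genuine gap is exactly the step you flag as the heart: producing, from a family $(f_\alpha)_{\alpha<\kappa}$ equivalent to the $\ell_1(\kappa)$ basis, a subfamily of size $\kappa$ with independent level sets. What you offer there is not a proof. Rosenthal's criterion gives independence only within countable subfamilies (and only after passing to further subfamilies), and independence is a \emph{joint} property: it demands points of $K$ realising all $2^n$ Boolean patterns, so it does not accumulate from countable pieces. (Even your normalisation is shaky: the pair $(r,\delta)$ is attached to a subfamily, not to a single index, so there is no map $\alpha\mapsto(r_\alpha,\delta_\alpha)$ to count over.) Invoking the generalised $\Delta$-system lemma remains a gesture until you specify which family of countable sets it is applied to, and why a root-plus-disjoint-petals structure would force finite intersections of level sets to be nonempty: disjointness of supports is a statement about index sets, independence a statement about existence of points of $K$. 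That this step is the entire content of the theorem, and not bookkeeping, is witnessed by the paper itself: under CH (where $\omega^\omega=\omega_1$, so the arithmetic hypothesis fails at $\omega_1$), Theorem A(ii) produces an adequate Corson compact $K=\Fc$ with $\ell_1(\omega_1)\embed C(K)$, yet $K$ admits no continuous surjection onto $[0,1]^{\omega_1}$, since continuous images of Corson compacta are Corson and $[0,1]^{\omega_1}$ is not. Hence no purely combinatorial promotion of the kind you sketch can succeed; the hypothesis $\lambda^\omega<\kappa$ must enter structurally. Haydon's actual route --- which you mention only in parentheses --- goes through the dual: Pe\l czy\'nski's theorem gives $L_1\{0,1\}^\kappa\embed C(K)^*$, from which Haydon's analysis of types of measures extracts a Radon measure on $K$ of large Maharam type, and the delicate core is the conversion of the measure-theoretic independence furnished by Maharam's theorem into genuine topological independence of closed sets; it is in that conversion that regularity and $\lambda^\omega<\kappa$ do their work. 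As it stands, your proposal is a correct reduction of the theorem to its hardest step, together with a solution of everything except that step.
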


Having recorded all the results we shall build on, we can now approach the proof of Theorem B. 
\begin{proof}[Proof of Theorem B] Let $\kappa$ be a regular cardinal number such that $\lambda^\omega<\kappa$ whenever $\lambda<\kappa$, let $S$ be a set with $|S|\geq\kappa$ and $\F\subseteq[S]^{<\omega}$ be an hereditary family such that $(\dagger)$ holds. When we combine Proposition \ref{Prop: Ptak gives l1} with Haydon's result, we obtain the existence of a continuous surjection from $\Fc$ to $[0,1]^\kappa$. Consequently, there exists a closed subset $\mathcal{K}_1$ of $\Fc$ that continuously maps onto $\{0,1\}^\kappa$; note that, being a subspace of $\{0,1\}^\kappa$, $\mathcal{K}_1$ is totally disconnected. In light of \v{S}apirovski\v{\i}'s theorem, we conclude that there exists a closed subspace $\mathcal{K}$ of ($\mathcal{K}_1$, hence of) $\Fc$ such that the pseudo-weight of $\mathcal{K}$ at $x$ is at least $\kappa$, for every $x\in\mathcal{K}$. In particular, if $\mathcal{B}$ is any local basis for the topology of $\mathcal{K}$ at any $x\in\mathcal{K}$, then $|\mathcal{B}|\geq\kappa$.\smallskip

Before we proceed, introducing a bit of notation is in order. If $A\in\mathcal{K}$ and $F$ and $G$ are finite subsets of $A$ and $S\setminus A$ respectively, then we shall denote by
$$\mathcal{U}_A(F,G):=\{B\in\mathcal{K}\colon F\subseteq B\subseteq S\setminus G\},$$
a neighborhood of $A$ in $\mathcal{K}$. A particular case of this piece of notation is that
$$\mathcal{U}_A(F,\emptyset):=\{B\in\mathcal{K}\colon F\subseteq B\}.$$
Plainly, 
$$\left\{\mathcal{U}_A(F,G)\colon F\in[A]^{<\omega}, G\in[S\setminus A]^{<\omega}\right\}$$
is a local basis for the topology of $\mathcal{K}$ at $A$.\smallskip

We now note that $\mathcal{K}$ may also be considered as a partially ordered set, with respect to set inclusion. When such partial order is considered, then every chain in $\mathcal{K}$ admits an upper bound in $\mathcal{K}$; in fact, the union of the chain belongs to $\mathcal{K}$, $\mathcal{K}$ being closed in the pointwise topology. An appeal to Zorn's lemma allows us to deduce that there exist in $\mathcal{K}$ maximal elements with respect to inclusion.
\begin{claim*} If $M\in\mathcal{K}$ is any maximal element, then a local basis for the topology of $\mathcal{K}$ at $M$ is given by
$$\mathcal{B}:=\left\{\mathcal{U}_M(F,\emptyset)\colon F\in[M]^{<\omega}\right\}.$$
\end{claim*}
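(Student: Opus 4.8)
The plan is to show that every basic neighborhood of the form $\mathcal{U}_M(F,G)$, with $F\in[M]^{<\omega}$ and $G\in[S\setminus M]^{<\omega}$, already contains a set of the form $\mathcal{U}_M(F',\emptyset)$ for a suitable finite enlargement $F'\supseteq F$ inside $M$. Since the sets $\mathcal{U}_M(F,G)$ are known to form a local basis at $M$, this at once yields that $\mathcal{B}$ is a local basis too. The whole idea is to absorb the finitely many ``avoidance'' constraints encoded by $G$ into the single requirement that $B$ contain a sufficiently large finite subset of $M$; this is precisely the step where the maximality of $M$ must enter, since $\mathcal{K}$ itself need not be hereditary.

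The key lemma I would isolate is: for every $g\in S\setminus M$ there exists a finite set $F_g\subseteq M$ such that no $B\in\mathcal{K}$ satisfies $F_g\cup\{g\}\subseteq B$. The hard part---indeed essentially the only non-routine step---is proving this, and I would do so by contradiction and compactness. Suppose, for some fixed $g\in S\setminus M$, that the set $C_F:=\{B\in\mathcal{K}\colon F\cup\{g\}\subseteq B\}$ is non-empty for every $F\in[M]^{<\omega}$. Each $C_F$ is closed in $\mathcal{K}$ (it is defined by prescribing finitely many coordinates to equal $1$), and since $C_{F_1}\cap C_{F_2}=C_{F_1\cup F_2}$, the family $\{C_F\colon F\in[M]^{<\omega}\}$ has the finite intersection property. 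By compactness of $\mathcal{K}$ its intersection is non-empty; but this intersection is exactly $\{B\in\mathcal{K}\colon M\cup\{g\}\subseteq B\}$, so we obtain $B\in\mathcal{K}$ with $M\subsetneq M\cup\{g\}\subseteq B$, contradicting the maximality of $M$.

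With the lemma in hand the remainder is immediate. Given $\mathcal{U}_M(F,G)$ with $G=\{g_1,\dots,g_k\}$, set $F':=F\cup F_{g_1}\cup\dots\cup F_{g_k}\in[M]^{<\omega}$. If $B\in\mathcal{K}$ satisfies $F'\subseteq B$, then certainly $F\subseteq B$, while for each $i$ we cannot have $g_i\in B$: otherwise $F_{g_i}\cup\{g_i\}\subseteq B$, contradicting the choice of $F_{g_i}$. Hence $B\cap G=\emptyset$, so $B\in\mathcal{U}_M(F,G)$, which gives $\mathcal{U}_M(F',\emptyset)\subseteq\mathcal{U}_M(F,G)$, as desired. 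I expect no genuine obstacle beyond setting up the finite-intersection argument correctly; the only subtlety worth flagging is that, because $\mathcal{K}$ is not assumed hereditary, it is maximality of $M$, and not any closure-under-subsets property, that does the work.
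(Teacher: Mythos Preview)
Your proof is correct and follows essentially the same strategy as the paper's: both arguments exploit compactness of $\mathcal{K}$ together with maximality of $M$ to rule out elements of $\mathcal{K}$ containing $M$ together with a point of $S\setminus M$. The only cosmetic difference is that you handle each $g\in G$ separately via the finite intersection property and then combine, whereas the paper argues by contradiction with the whole set $\tilde{G}$ at once, passing to a cluster point of a suitable net.
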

Since clearly $|\mathcal{B}|\leq|M|$, our previous considerations allow us to conclude that, if $M\in\mathcal{K}$ is any maximal element, then $|M|\geq\kappa$. If we select any such maximal element--whose existence we noted above--then Fact \ref{Fact: from Fclosure to Ptak} assures us that $M$ is the set we were looking for. Therefore, in order to conclude the proof, we only need to establish the claim.

\begin{proof}[Proof of the claim] Assume by contradiction that $\mathcal{B}$ is not a local basis. Then there exist finite sets $\tilde{F}$ and $\tilde{G}$ with $\tilde{F}\subseteq M$ and $\tilde{G}\subseteq S\setminus M$ such that no element of $\mathcal{B}$ is contained in $\mathcal{U}_M(\tilde{F},\tilde{G})$. In particular, for every finite set $I$ with $\tilde{F}\subseteq I\subseteq M$ the clopen set  $\mathcal{U}_M(I,\emptyset)\setminus \mathcal{U}_M(\tilde{F},\tilde{G})$ is non-empty. Since evidently $\mathcal{U}_M(I,\emptyset)\cap \mathcal{U}_M(J,\emptyset)=\mathcal{U}_M(I\cup J,\emptyset)$, we deduce that the family of clopen sets

$$\left\{\mathcal{U}_M(I,\emptyset)\setminus \mathcal{U}_M(\tilde{F},\tilde{G})\colon \tilde{F}\subseteq I\subseteq M \text{ and } I \text{ is finite}\right\}$$
has the finite intersection property. The compactness of $\mathcal{K}$ then yields the existence of an element $\tilde{A}$ that belongs to the intersection of the family; it is then immediate to see that $M\subseteq \tilde{A}$. Moreover, the condition $\tilde{A}\notin \mathcal{U}_M(\tilde{F},\tilde{G})$ is equivalent to the fact that $\tilde{A}\cap\tilde{G}\neq \emptyset$ (since $\tilde{F}\subseteq I\subseteq \tilde{A}$). Consequently, $\tilde{A}$ is a proper extension of $M$ (recall that $M\cap\tilde{G}=\emptyset$), thereby contradicting the maximality of $M$ and thus concluding the proof.

\end{proof}\end{proof}

In conclusion to our note, we shall add a few comments on Haydon's result, Theorem \ref{Haydon's th: l1(k) iff onto map}. When \cite{Haydon quotient} appeared, it was unknown whether the equivalence stated in Theorem \ref{Haydon's th: l1(k) iff onto map} (or, more generally, the equivalence between the assertions in \cite[Remark 2.5]{Haydon quotient}) could possibly hold under more general assumptions on $\kappa$. The sufficient condition holding true for every cardinal $\kappa$, Haydon himself (unpublished) later noted that the necessary condition fails to hold for $\kappa=\omega_1$, under the Continuum Hypothesis. One such example was also obtained by N.~Kalamidas, in his Doctoral dissertation (\emph{cf.} \cite[Example 1.3]{Negr84}).

Incidentally, this is also a consequence of the results presented in our note, since the unique point where the proof of Theorem B depends on some cardinality assumption is the appeal to Theorem \ref{Haydon's th: l1(k) iff onto map}; in particular, Theorem B actually holds true for every cardinal number for which the equivalence in Theorem \ref{Haydon's th: l1(k) iff onto map} holds. Theorem A(ii) then yields the desired counterexample.\smallskip

In accordance with Argyros' results on Pe\l czy\'nski's conjecture that we mentioned above, it is natural to conjecture that Haydon's equivalence may actually be valid for every cardinal number $\kappa\geq\omega_2$. This would, of course, imply the validity of Pt\'ak's lemma for every $\kappa\geq\omega_2$.\smallskip

In case that the conjecture were true, it would also lead to a negative answer to the following question.
\begin{problem} Is the existence of a Corson compact $K$ such that $\ell_1(\omega_2)\embed C(K)$ consistent with ZFC?
\end{problem}
Let us just note that, under CH, such a compact space can not exist, in light of Theorem \ref{Haydon's th: l1(k) iff onto map} and the fact that continuous images of Corson compact are Corson compact (\cite{Gulko: image of Corson, MichaelRudin: Corson}), while $[0,1]^{\omega_2}$ is not Corson. Such a compact space also fails to exist under MA$_{\omega_1}$, according to the results we recorded at the beginning of Section \ref{Sec3 Omega1}.

\medskip{}
\textbf{Acknowledgements.} The authors wish to express their gratitude to the anonymous referee for the careful reading of the manuscript and for suggesting a cleaner proof of the claim in the proof of Theorem B.

%%%%%%%%%%%%%%%%%%%%%%%%%%%%%%%%%%%%%%%%%%%%%%%%%%%%%%%%%%%%%%%%%%%%%%%%%%%%%%%%%%%%%%%
%%%%%%%%%%%%%%%%%%%%%%%%%%%%%%%%%%%%%%%%%%%%%%%%%%%%%%%%%%%%%%%%%%%%%%%%%%%%%%%%%%%%%%%

\end{document}